\definecolor{hot}{RGB}{65,105,225}
\theoremstyle{plain}
\newtheorem{theorem}{Theorem}[section]
\newtheorem{cor}[theorem]{Corollary}
\newtheorem{lemma}[theorem]{Lemma}
\theoremstyle{definition}
\newtheorem{rmk}[theorem]{Remark}
\newtheorem{ex}[theorem]{Example}
\newtheorem*{ex*}{Example}
\newcommand\sF{{\mathcal F}}
\newcommand\pp{{\mathbb{P}}}
\newcommand\zz{{\mathbb{Z}}}
\newcommand\cc{{\mathbb{C}}}
\newcommand\nn{{\mathbb{N}}}
\def\bH{\mathbb{H}}
\DeclareMathOperator{\codim}{codim}              
\def\bC{\mathbb{C}}
\def\bZ{\mathbb{Z}}
\title[The signed Euler characteristic of very affine varieties]{The signed Euler characteristic of very affine varieties}
\begin{document}
\author{Nero Budur}
\email{Nero.Budur@wis.kuleuven.be}
\address{KU Leuven and University of Notre Dame}
\curraddr{KU Leuven, Department of Mathematics,
Celestijnenlaan 200B, B-3001 Leuven, Belgium}

\author{Botong Wang}
\email{bwang3@nd.edu}
\address{University of Notre Dame}
\curraddr{Department of Mathematics,
 255 Hurley Hall, IN 46556, USA} 
\date{}

\thanks{The first author was partly sponsored by the Simons Foundation and NSA}

\begin{abstract} A conjecture of J. Huh and B. Sturmfels predicts that the sign of the Euler characteristic of a complex very affine variety depends only on the parity of the dimension. The conjecture is true for locally complete intersections. Beyond this case, we construct counterexamples with arbitrarily bad failure. 
\end{abstract}
\maketitle
\section{Introduction}
Let $X$ be a closed irreducible subvariety of $(\cc^*)^n$. In the literature, this is called a very affine variety. When $X$ is a locally complete intersection, $(-1)^{\dim(X)}\chi(X)\geq 0$. This follows from generic vanishing results for perverse sheaves on $(\cc^*)^n$ due to Loeser-Sabbah \cite{LS} (see also Gabber-Loeser \cite{GL}), together with the well-known fact that for a locally complete intersection the shifted sheaf $\bC_X[\dim X]$ is perverse. For the smooth case, see also \cite{Hu}. Since the lci case is not well-known, for the convenience of the reader we include a proof at the end of this article.

It was conjectured by Huh and Sturmfels \cite[page 6]{HS} that the same is true for any closed irreducible subvariety $X$ of $(\cc^*)^n$. In this note, we construct counterexamples by displaying singular surfaces in $(\cc^*)^4$ with arbitrary negative Euler characteristics.

\section{Construction}
We start with a smooth surface $U$ in $(\cc^*)^4$ defined as $$U=\{(w, x, y, z)\in (\cc^*)^4 \mid w+y=x+z=1\},$$ where $w, x, y, z$ are the coordinates in $(\cc^*)^4$. 

We define an action of $\zz/n\zz$ on $(\cc^*)^4$. Let $\xi\in \cc$ be an $n$-th primitive root of unity. We set $\xi (w, x, y, z)=(\xi w, \xi x, \xi^{-1}y, \xi^{-1}z)$. This defines a $\zz/n\zz$ action on $(\cc^*)^4$ by translations. Hence, the quotient $(\cc^*)^4/(\zz/n\zz)$ is again a commutative affine algebraic group. Such algebraic group has to be isomorphic to $(\cc^*)^4$. In fact, we can give an explicit description of the quotient map, which we denote by $p_n: (\cc^*)^4\to (\cc^*)^4$,
$$p_n: (w, x, y, z) \mapsto (w^n, \frac{w}{x}, wy, wz). $$
We denote the image $p_n(U)$ by $U_n$. Then $U_n$ is an irreducible subvariety of $(\cc^*)^4$. 
\begin{theorem}\label{main}
When $n$ is odd, $\chi(U_n)=\frac{3-n}{2}$. 
\end{theorem}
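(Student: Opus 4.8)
The plan is to exploit that, since the $\zz/n\zz$-action is by translations on the group $(\cc^*)^4$, it is free, so $p_n$ is a finite \'etale (Galois) cover of degree $n$. Writing $\widetilde U_n:=p_n^{-1}(U_n)=\bigcup_{k=0}^{n-1}\xi^k\cdot U$ for the saturation of $U$, multiplicativity of the Euler characteristic under finite \'etale covers (using the compactly supported version, which for complex varieties agrees with the topological one and is additive) gives $\chi(\widetilde U_n)=n\,\chi(U_n)$. Hence it suffices to compute $\chi\big(\bigcup_{k=0}^{n-1}\xi^k U\big)$ and divide by $n$.

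For the union I would use additivity of $\chi$ via inclusion--exclusion over the translates $\xi^k U$. The essential simplification is that the defining equations of $\xi^k U$ separate the coordinates into the pair $(w,y)$ and the pair $(x,z)$: one computes $\xi^k U=\{\,\xi^{-k}w+\xi^k y=1,\ \xi^{-k}x+\xi^k z=1\,\}$. Thus for every $S\subseteq\zz/n\zz$ the intersection factors as $\bigcap_{k\in S}\xi^k U=A_S\times A_S$ under $(\cc^*)^4\cong(\cc^*)^2_{w,y}\times(\cc^*)^2_{x,z}$, where $A_S=\{(w,y)\in(\cc^*)^2:\ \xi^{-k}w+\xi^k y=1\ \text{for all}\ k\in S\}$. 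In particular $\chi\big(\bigcap_{k\in S}\xi^k U\big)=\chi(A_S)^2$, so the whole problem reduces to understanding the point sets $A_S$ in the plane.

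Next I would compute $\chi(A_S)$ by cases on $m=|S|$. For $m=1$, rescaling shows $A_{\{k\}}\cong\cc\setminus\{0,1\}$, so $\chi=-1$. For $m=2$ the two linear equations have $2\times2$ determinant $\xi^{d}-\xi^{-d}$ with $d\neq0$, which is nonzero \emph{precisely because $n$ is odd} (so $2d\not\equiv0\bmod n$); the unique solution is easily checked to lie in $(\cc^*)^2$, so $A_S$ is a single point and $\chi=1$. The key step---and the main thing to get right---is $m\geq3$: here $A_S=\varnothing$ always. Indeed, consistency of three equations indexed by distinct $k_1,k_2,k_3$ would force the $3\times3$ matrix with rows $(\xi^{-k_i},\xi^{k_i},1)$ to be singular; but scaling row $i$ by $\xi^{k_i}$ turns it into a Vandermonde matrix in the distinct values $\xi^{k_i}$, whose determinant is nonzero. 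So the system is inconsistent, and every higher intersection is empty.

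Finally I assemble the inclusion--exclusion. Only $|S|\le 2$ contribute, with $\chi(A_S)^2=1$ in both cases, giving $\chi\big(\bigcup_k\xi^k U\big)=\binom{n}{1}-\binom{n}{2}=\tfrac{n(3-n)}{2}$; dividing by $n$ yields $\chi(U_n)=\tfrac{3-n}{2}$. The only delicate points are the freeness/covering step that legitimizes dividing by $n$, and the Vandermonde nonvanishing that kills all intersections of three or more translates; the oddness of $n$ enters solely to keep every pairwise intersection a genuine point of $(\cc^*)^2$ rather than empty.
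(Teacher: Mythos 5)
Your proof is correct, but it assembles the computation by a genuinely different route than the paper. The paper works with the degree-one map $q_n\colon U\to U_n$ obtained by restricting $p_n$: it computes each intersection $U\cap\xi^iU$ explicitly (a single transverse point), checks that no three translates meet, and concludes that $U_n$ is obtained from $U\cong(\cc\setminus\{0,1\})\times(\cc\setminus\{0,1\})$ by identifying the $n-1$ points $U\cap\xi^iU$ in pairs, whence $\chi(U_n)=\chi(U)-\tfrac{n-1}{2}=\tfrac{3-n}{2}$. You instead pass to the full saturation $p_n^{-1}(U_n)=\bigcup_{k}\xi^kU$, use multiplicativity of $\chi_c$ under the free $\zz/n\zz$-quotient to write $\chi(U_n)=\tfrac{1}{n}\chi\bigl(\bigcup_k\xi^kU\bigr)$, and evaluate the union by inclusion--exclusion, with the product decomposition $\bigcap_{k\in S}\xi^kU=A_S\times A_S$ and a Vandermonde argument replacing the paper's explicit solution of the intersection equations. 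Both routes rest on the same two geometric facts (for $n$ odd, pairwise intersections of translates are single points of the torus, and triple intersections are empty), but the trade-offs differ: your argument needs only set-theoretic intersection data plus soft general principles (additivity of $\chi_c$ and its multiplicativity for finite covers), and never analyzes the map $q_n$ or the singularities of $U_n$; the paper's gluing picture requires slightly more care about the quotient map but yields, as a by-product, the local structure of the singular points of $U_n$ (transverse intersections of two smooth surface germs), which the paper records as a separate corollary and which matters for the broader point that the counterexamples have mild, non-lci singularities. One cosmetic correction: after scaling row $i$ by $\xi^{k_i}$ the rows become $(1,\xi^{2k_i},\xi^{k_i})$, which is a Vandermonde matrix in the distinct nodes $\xi^{k_i}$ only after swapping the last two columns; the determinant is still nonzero, so your conclusion that all triple intersections are empty stands.
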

The proof of the theorem will be in the next section. 

\section{Euler characteristic}
We will see that $U_n$ has only isolated singularities, which are analytically equivalent to the transverse intersection of two smooth surfaces in $\cc^4$. Moreover, the normalisation of $U_n$ will be isomorphic to $U$. This allows us to compute the Euler characteristic of $U_n$ by counting the number of singular points on $U_n$. Throughout this section, we assume $n$ is odd. 
\begin{lemma}
For any $1\leq i\leq n-1$, $U\cap \xi^i U$ has exact one point, and the intersection is transverse. Furthermore, $U\cap \xi^i U\cap \xi^j U=\emptyset$ for $1\leq i<j\leq n-1$. 
\end{lemma}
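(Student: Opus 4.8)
The plan is to make the group translates explicit and reduce everything to a single $2\times 2$ linear system whose determinant is controlled by the parity of $n$.

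First I would record the defining equations of the translate $\xi^i U$. Since $\xi^{-i}\cdot(w,x,y,z)=(\xi^{-i}w,\xi^{-i}x,\xi^i y,\xi^i z)$, pulling back the two equations $w+y=1$ and $x+z=1$ that define $U$ shows that $\xi^i U$ is cut out inside $(\cc^*)^4$ by $\xi^{-i}w+\xi^i y=1$ and $\xi^{-i}x+\xi^i z=1$. Hence a point of $U\cap\xi^i U$ is precisely a point of $(\cc^*)^4$ satisfying the four equations $w+y=1$, $\xi^{-i}w+\xi^i y=1$, $x+z=1$, $\xi^{-i}x+\xi^i z=1$. The key observation is that these decouple into two copies of the same $2\times 2$ system, one in $(w,y)$ and one in $(x,z)$, with coefficient matrix $\left(\begin{smallmatrix}1&1\\ \xi^{-i}&\xi^i\end{smallmatrix}\right)$ and determinant $\xi^i-\xi^{-i}$.

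The main point, and where oddness enters, is that this determinant is nonzero for every $1\le i\le n-1$: it vanishes iff $\xi^{2i}=1$, i.e. $2i\equiv 0\pmod n$, which for odd $n$ forces $i\equiv 0$. Thus Cramer's rule yields a unique common solution, and a short simplification gives $w=x=\xi^i/(\xi^i+1)$ and $y=z=1/(\xi^i+1)$, where $\xi^i+1\neq 0$ again because $n$ is odd (so $\xi^i\neq-1$). I would then check directly that all four coordinates are nonzero, so the unique affine solution genuinely lies in $(\cc^*)^4$ and hence in $U\cap\xi^i U$; this settles both existence and uniqueness of the single intersection point. For transversality I would note that $T_pU$ is cut out by $\dot w+\dot y=0$, $\dot x+\dot z=0$, while $T_p(\xi^i U)$ is cut out by $\xi^{-i}\dot w+\xi^i\dot y=0$, $\xi^{-i}\dot x+\xi^i\dot z=0$. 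Since both surfaces are $2$-dimensional in the $4$-dimensional ambient torus, transversality is equivalent to $T_pU\cap T_p(\xi^i U)=0$; but the same nonvanishing determinant $\xi^i-\xi^{-i}$ forces $\dot w=\dot y=0$ and $\dot x=\dot z=0$, so the tangent spaces meet only at the origin.

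Finally, for the triple intersections I would use that any point of $U\cap\xi^i U\cap\xi^j U$ lies in $U\cap\xi^i U$, hence has $w$-coordinate $w_i:=\xi^i/(\xi^i+1)$, and simultaneously in $U\cap\xi^j U$, hence has $w$-coordinate $w_j$. Since $w_i=w_j$ is equivalent to $\xi^i=\xi^j$, which fails for $1\le i<j\le n-1$, no such point can exist, so $U\cap\xi^i U\cap\xi^j U=\emptyset$. I do not anticipate a serious obstacle here; the only things to watch are keeping the oddness of $n$ in view (it is exactly what makes the determinant invertible) and keeping track of the torus condition $w,x,y,z\neq 0$, so that the unique affine solution really lands in $(\cc^*)^4$.
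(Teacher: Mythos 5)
Your proof is correct and takes essentially the same route as the paper's: write down the linear equations cutting out $\xi^i U$, solve the (decoupled) linear system to obtain the unique intersection point $\left(\tfrac{\xi^i}{1+\xi^i},\tfrac{\xi^i}{1+\xi^i},\tfrac{1}{1+\xi^i},\tfrac{1}{1+\xi^i}\right)$, deduce transversality from the nondegeneracy of those four linear equations, and rule out triple intersections because the points for distinct $i$ differ. You are simply more explicit than the paper about where oddness of $n$ enters (the nonvanishing of $\xi^i-\xi^{-i}$ and of $1+\xi^i$) and about checking that the solution lies in the torus, which the paper compresses into ``a direct computation shows.''
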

\begin{proof}
Recall that $U=\{(w, x, y, z)\in (\cc^*)^4 \mid w+y=x+z=1\}$. Hence $$\xi^i U=\{(w, x, y, z)\in (\cc^*)^4 \mid \xi^{-i} w+\xi^i y=\xi^{-i} x+\xi^i z=1\}.$$ A direct computation shows $U\cap \xi^i U=\{(\frac{\xi^i}{1+\xi^i}, \frac{\xi^i}{1+\xi^i}, \frac{1}{1+\xi^i}, \frac{1}{1+\xi^i})\}.$ Since the intersection is defined by 4 linear equations, clearly it is transverse. The last part is obvious, since $\frac{\xi^i}{1+\xi^i}\neq \frac{\xi^j}{1+\xi^j}$ for $1\leq i<j\leq n-1$. 
\end{proof}
\begin{cor}
$U_n$ has $\frac{n-1}{2}$ isolated singular points. Moreover, the germ of $U_n$ at any singular point is analytically equivalent to the germ of $\{w=x=0\}\cup \{y=z=0\}$ in $\cc^4$ at origin. In other words, locally the singularity is obtained by the transverse intersection of two smooth surfaces. 
\end{cor}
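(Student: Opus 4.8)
The plan is to exploit that $p_n$ is the quotient map of a \emph{free} action. Since every coordinate of a point of $(\cc^*)^4$ is nonzero, $\xi^k\cdot(w,x,y,z)=(w,x,y,z)$ forces $\xi^k=1$, so the $\zz/n\zz$-action is free and $p_n$ is a finite \'etale covering of degree $n$. In particular $p_n$ is a local analytic isomorphism at every point, and this is the main tool: every local question about $U_n=p_n(U)$ can be transported to the union of translates $p_n^{-1}(U_n)=\bigcup_{k=0}^{n-1}\xi^k U$ upstairs, where the preceding Lemma applies. Write $s_i$ for the unique point of $U\cap\xi^i U$, $1\le i\le n-1$.

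First I would describe the fibers of $p_n|_U$. Two points of $U$ share an image under $p_n$ precisely when they lie in one $\zz/n\zz$-orbit, so for $u\in U$ the set of preimages of $p_n(u)$ lying in $U$ is $\{\xi^k u:0\le k\le n-1\}\cap U$, that is, the set of $k$ with $u\in U\cap\xi^{-k}U$. By the Lemma, for $k\not\equiv 0$ this intersection is the single point $s_{n-k}$, so $u$ has a nontrivial fiber only when $u=s_i$ for some $i$. Applying $\xi^{-i}$ to $U\cap\xi^iU=\{s_i\}$ gives $\xi^{-i}s_i\in U\cap\xi^{n-i}U=\{s_{n-i}\}$, hence $\xi^{-i}s_i=s_{n-i}$ and $p_n(s_i)=p_n(s_{n-i})$; it follows that the preimages of $p_n(s_i)$ in $U$ are exactly the two distinct points $s_i,s_{n-i}$. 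As $n$ is odd no index satisfies $i=n-i$, so $\{1,\dots,n-1\}$ splits into $\tfrac{n-1}{2}$ pairs $\{i,n-i\}$, yielding $\tfrac{n-1}{2}$ distinct double points $p_n(s_i)$ and no others.

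Next I would settle smoothness away from these points: at any $u\in U$ that is the only preimage of $p_n(u)$ in $U$, finiteness of $p_n$ yields a neighborhood $W$ of $p_n(u)$ with $p_n^{-1}(W)\cap U$ contained in a neighborhood of $u$ on which $p_n$ is an isomorphism, whence $U_n\cap W$ is a smooth surface. For the local model at a double point $q=p_n(s_i)$, the key observation is that among all translates only $U=\xi^0U$ and $\xi^iU$ pass through $s_i$: if $s_i\in\xi^kU$ then, since $s_i\in U$, we get $s_i\in U\cap\xi^kU=\{s_k\}$, forcing $k\in\{0,i\}$. Hence in a small neighborhood of $s_i$ one has $p_n^{-1}(U_n)=U\cup\xi^iU$, and by the Lemma these two smooth surfaces meet transversally at $s_i$; in particular they are distinct branches, so $q$ is a genuine singular point.

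Finally, since $p_n$ is a local biholomorphism at $s_i$ carrying this neighborhood onto a neighborhood of $q$ and carrying the two transverse branches $U$ and $\xi^iU$ onto the two branches of $U_n$ through $q$, the germ of $U_n$ at $q$ is the transverse intersection of two smooth surfaces in $\cc^4$. After an analytic change of coordinates straightening the first branch to $\{y=z=0\}$, transversality forces the second branch to project isomorphically onto the $yz$-plane, hence to be a graph over it; a shear fixing $\{y=z=0\}$ then sends the second branch to $\{w=x=0\}$, exhibiting the germ as analytically equivalent to $\{w=x=0\}\cup\{y=z=0\}$ at the origin. I expect the main obstacle to be this local bookkeeping: verifying that exactly two translates pass through each $s_i$ and that the local isomorphism $p_n$ faithfully transports both surfaces and their transversality, after which the standard simultaneous straightening of two transverse smooth surfaces finishes the proof.
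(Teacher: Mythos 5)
Your proposal is correct and takes essentially the same route as the paper's own proof: both rest on the freeness of the $\zz/n\zz$-action (so $p_n$ is a local isomorphism along $U$), on the Lemma to locate the double points and pair them via $i \leftrightarrow n-i$ (giving the count $\frac{n-1}{2}$ for odd $n$), and on transversality plus the fact that no third translate $\xi^k U$ passes through $U\cap \xi^i U$ to obtain the local model of two transverse smooth surfaces. The only difference is one of detail: you make explicit the fiber computation for $p_n|_U$, the finiteness argument for smoothness away from the double points, and the analytic straightening of two transverse surfaces to $\{w=x=0\}\cup\{y=z=0\}$, all of which the paper leaves implicit.
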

\begin{proof}
The quotient map $p_n: (\cc^*)^4\to (\cc^*)^4$ restricts to a map $q_n: U\to U_n$. The map $q_n$ is an isomorphism on $U-\bigcup_{1\leq i\leq n-1}(U\cap \xi^i U)$. Every intersection $U\cap \xi^i U$ will create a singular point on $U_n$, which is the image of $U\cap \xi^i U$ under $q_n$. Since the intersection $U\cap \xi^i U$ is transverse, and since this intersection is not contained in any other $\xi^j U$, the corresponding singular point in $U_n$ is locally isomorphic to the transverse intersection of two smooth surfaces. Notice that $\xi^{-i} (U\cap \xi^i U)=U\cap \xi^{n-i} U$. $U\cap \xi^i U$ and $U\cap \xi^{n-i} U$ give the same singular point in $U_n$. On the other hand, each singular point in $U_n$ comes from exactly two such intersections. Therefore, $U_n$ has exactly $\frac{n-1}{2}$ singular points. 
\end{proof}
\begin{proof}[Proof of Theorem \ref{main}]
The map $\cc^2\to \cc^4$, defined by $(a, b)\mapsto (a, b, 1-a, 1-b)$ induces an isomorphism between $(\cc-\{0, 1\})\times (\cc-\{0, 1\})$ and $U$. Therefore, by K\"{u}nneth's formula $\chi(U)=1$. According to the corollary, $U_n$ is obtained from $U$ by attaching $\frac{n-1}{2}$ pairs of points. Thus $\chi(U_n)=1-\frac{n-1}{2}=\frac{3-n}{2}$. 
\end{proof}

\begin{ex}The smallest $n$ for which $U_n$ is a counterexample to the above-mentioned conjecture is $n=5$: $\chi(U_5)=-1$. Using a computer algebra program we can compute the equations for $U_5$. This surface is the common zero locus in $(\bC^*)^4$ of the following 4 equations in 4 variables:
$$
\begin{aligned}
0 &= t_2^2t_4^2+t_2^2t_3-t_2^2t_4-2t_2t_3t_4-{t_2}{t_3}+{t_3}^2+{t_2}{t_4},\\
0 &={t_2}{t_3}^2{t_4}+2{t_2}{t_3}^2-{t_3}^3-3{t_2}{t_3}{t_4}-{t_1}{t_2}-{t_2}{t_3}+{t_3}^2+{t_2}{t_4}+{t_1},\\
0 &=  {t_3}^4-{t_3}^3{t_4}+{t_1}{t_2}{t_4}^2-5{t_2}{t_3}{t_4}^2+{t_1}{t_2}{t_3}-5{t_2}{t_3}^2+3{t_3}^3-{t_1}{t_2}{t_4}-{t_1}{t_3}{t_4}+5{t_2}{t_3}{t_4}+\\
& \quad+2{t_3}^2{t_4} +2{t_2}{t_4}^2-3{t_1}{t_3}+2{t_2}{t_3}-2{t_3}^2+3{t_1}{t_4}-2{t_2}{t_4}, \\   
0 &={t_1}{t_2}{t_3}{t_4}^2-{t_3}^3{t_4}^2+{t_1}{t_2}{t_4}^3-5{t_2}{t_3}{t_4}^3+{t_1}{t_2}{t_3}^2-{t_1}{t_3}^2{t_4}+3{t_3}^3{t_4}-4{t_1}{t_2}{t_4}^2\\
& \quad -{t_1}{t_3}{t_4}^2+7{t_2}{t_3}{t_4}^2+2{t_3}^2{t_4}^2+2{t_2}{t_4}^3-3{t_1}{t_2}{t_3}+2{t_1}{t_3}^2+12{t_2}{t_3}^2-6{t_3}^3+3{t_1}{t_2}{t_4}+\\
& \quad+3{t_1}{t_3}{t_4} -15{t_2}{t_3}{t_4}-3{t_3}^2{t_4}+3{t_1}{t_4}^2-3{t_2}{t_4}^2-{t_1}^2-5{t_1}{t_2}-{t_1}{t_3}-6{t_2}{t_3}+6{t_3}^2-\\
& \quad-4{t_1}{t_4}+6{t_2}{t_4}+6{t_1}.
\end{aligned}
$$
(Note: an output via the command {\tt mingens} in Macaulay2 contains one extra, redundant, equation due to being used in an inhomogeneous situation. We thank a referee for kindly point this out.)

\end{ex}

\begin{rmk} For the motivation behind the conjecture on Euler characteristics and relations with maximum likelikhood degrees, we refer to the survey \cite{HS}. Our examples leave open \cite[Conjecture 1.8]{HS} on maximum likelihood degrees.
\end{rmk}

\section{Locally complete intersections}

For the conveniece of the reader, we give the proof of the following result due to \cite{LS, GL}:

\begin{theorem}
Let $X$ be a closed subvariety of $(\bC^*)^n$ of dimension $d$ such that the shifted complex $\bC_X[d]$ is a perverse sheaf on $(\bC^*)^n$. Then $(-1)^{d}\chi (X)\ge 0$. In particular, this holds for $X$ with at most locally complete intersection singularities.
\end{theorem}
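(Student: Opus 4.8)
The plan is to rewrite the signed Euler characteristic as the Euler characteristic of the hypercohomology of the perverse sheaf $P:=\bC_X[d]$ on the torus $T:=(\bC^*)^n$, and then to evaluate this Euler characteristic after twisting $P$ by a \emph{generic} rank-one local system, where a generic vanishing theorem forces everything except the middle hypercohomology to vanish.

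First I would record the bookkeeping identity. Writing $\mathbb{H}^i(T,-)$ for hypercohomology on $T$ and using that $P$ is pushed forward from the closed embedding $X\hookrightarrow T$, one has $\mathbb{H}^i(T,P)=H^{i+d}(X,\bC)$, so that $\chi(T,P):=\sum_i(-1)^i\dim\mathbb{H}^i(T,P)=(-1)^d\chi(X)$. Thus it suffices to prove $\chi(T,P)\ge 0$. Next I would use that this Euler characteristic is unchanged under twisting $P$ by any rank-one local system $L$ on $T$: since $L$ has rank one, tensoring by $L$ does not alter the stalkwise Euler characteristics of the constructible complex $P$, so by the additivity of $\chi$ (via a Whitney stratification and the local index formula) one gets $\chi(T,P\otimes L)=\chi(T,P)$ for every $L$ in the character torus $\mathrm{Char}(T)\cong(\bC^*)^n$.

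The heart of the argument is then to invoke the generic vanishing theorem of Loeser--Sabbah \cite{LS} (see also Gabber--Loeser \cite{GL}): for a perverse sheaf $P$ on $T$ and each $i\neq 0$, the locus $\{L:\mathbb{H}^i(T,P\otimes L)\neq 0\}$ is contained in a finite union of proper translated subtori of $\mathrm{Char}(T)$, hence in a proper Zariski-closed subset. Choosing $L$ outside this exceptional set, we obtain $\mathbb{H}^i(T,P\otimes L)=0$ for all $i\neq 0$, so $\chi(T,P)=\chi(T,P\otimes L)=\dim\mathbb{H}^0(T,P\otimes L)\ge 0$. Combined with the identity above this gives $(-1)^d\chi(X)\ge 0$. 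Finally, for the last assertion I would recall the well-known fact that when $X$ has at most local complete intersection singularities the shifted constant sheaf $\bC_X[d]$ is perverse — the support condition is automatic, and the cosupport condition follows from $X$ being Cohen--Macaulay of pure dimension $d$ — so that the lci case reduces to the perverse case already established.

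I expect the only genuinely deep input to be the generic vanishing theorem used in the third step, which I am taking as a black box from \cite{LS,GL}; the remaining steps are formal. The points requiring care are the twist-invariance of the Euler characteristic in the second step, and the verification that a generic character indeed avoids the exceptional subtori, i.e.\ that the bad locus is a proper closed subset of the connected parameter space $\mathrm{Char}(T)$ so that such an $L$ exists.
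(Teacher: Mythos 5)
Your proof is correct and follows essentially the same route as the paper's: both reduce to the generic vanishing theorem of Loeser--Sabbah and Gabber--Loeser for perverse sheaves on the torus (you phrase it via translated subtori, the paper via codimension bounds on cohomology jump loci), combined with the twist-invariance of the Euler characteristic under tensoring by a rank-one local system. The only cosmetic difference is that you spell out the justification of twist-invariance and of perversity in the lci case, which the paper simply cites.
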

\begin{proof} The space of rank-one local systems on $Y=(\bC^*)^n$ is the same as the space of characters $Char(Y)=Hom (H_1(Y,\bZ),\bC^*)\cong(\bC^*)^n$.
Consider the cohomology jump loci of rank-one local systems on $Y$ relative to a complex $\sF$ of constructible sheaves (in the analytic topology):
$$
V^i_k(\sF)=\{ L  \in Char (Y)\mid \dim_\bC \bH^i(Y,\sF\otimes_\bC L)\ge k \},
$$
where $\bH$ denotes hypercohomolgy. These are closed subschemes of $Char (Y)$. If $\sF$ is perverse, then $$\codim V^i_k(\sF)\ge i$$ for $k\ge 1$, by a fundamental result of \cite{LS, GL}. In particular,  $\bH^i(Y, \sF\otimes L)=0$ for a general $L$ and $i\ne 0$. Let now $\sF=\bC_X[d]$ viewed as a perverse sheaf on $Y$. Then, for a general $L$, $H^{i+d}(X, L_{|X})=\bH^{i}(Y,\bC_X[d]\otimes L)=0$ for $i\ne 0$. Hence
$$
(-1)^{d}\chi (X) =(-1)^{d}\chi (X, L_{|X})=\dim H^{d}(X, L_{|X})\ge 0,
$$
the first equality being true for any rank-one local system on a complex variety. It is well-known that shifted complex $\bC_X[d]$ is perverse on $X$ if $X$ has at most lci singularities \cite[Theorem 5.1.20]{Di}, and that it remains perverse when viewed on $Y$ via the direct image under the embedding of $X$ in $Y$.
\end{proof}

\bigskip
\end{document}